\newtheorem{Theorem}{\bf Theorem}
\newtheorem{Definition}[Theorem]{\bf Definition} 
\newtheorem{Lemma}[Theorem]{\bf Lemma}
\newtheorem{Assumption}[Theorem]{\bf Assumption}
\newcommand{\minus}{\scalebox{0.55}[1.0]{$-$}}
\newcommand\scalemath[2]{\scalebox{#1}{\mbox{\ensuremath{\displaystyle #2}}}}
\title{\LARGE \bf
	Data-driven distributionally robust MPC \\for systems with uncertain dynamics
}
\author{Francesco Micheli, Tyler Summers, John Lygeros
	\thanks{Research supported by the European Research Council under the H2020 Advanced Grant no. 787845 (OCAL).}%
	\thanks{F. Micheli and J. Lygeros are with the Automatic Control Laboratory in the Department of Information Technology and Electrical Engineering, ETH Z\"{u}rich, Switzerland. Emails: \texttt{\{frmicheli, jlygeros\}@ethz.ch}.}%
	\thanks{T. Summers is with the Department of Mechanical Engineering at University of Texas at Dallas, USA. Email: \texttt{tyler.summers@utdallas.edu}. The work of T. Summers is supported by the United States Air Force Office of Scientific Research under award number FA2386-19-1-4073 and the National Science Foundation under award number ECCS-2047040.}%
}
\begin{document}
	
	\maketitle
	\thispagestyle{empty}
	\pagestyle{empty}

	\begin{abstract}
		We present a novel data-driven distributionally robust Model Predictive Control formulation for unknown discrete-time linear time-invariant systems affected by unknown and possibly unbounded additive uncertainties. We use off-line collected data and an approximate model of the dynamics to formulate a finite-horizon optimization problem. To account for both the uncertainty related to the dynamics and the disturbance acting on the system, we resort to a distributionally robust formulation that optimizes the cost expectation while satisfying Conditional Value-at-Risk constraints with respect to the worst-case probability distributions of the uncertainties within an ambiguity set defined using the Wasserstein metric.
		Using results from the distributionally robust optimization literature we derive a tractable finite-dimensional convex optimization problem with finite-sample guarantees for the class of convex piecewise affine cost and constraint functions.
		The performance of the proposed algorithm is demonstrated in closed-loop simulation on a simple numerical example.
	\end{abstract}
	
	\section{Introduction}
	Model Predictive Control (MPC) relies on a model of the system dynamics to repeatedly solve, at each time-step, a Finite-Horizon Optimal Control (FHOC) problem subject to input and state constraints. Control performance is related to the quality of the open-loop predictions, making MPC susceptible to uncertainties in the prediction model.
	Uncertainty is present in many practically relevant control applications, as both the dynamics and the disturbance distribution are only approximately known or must be estimated from data. Hence, in the last two decades, robust and stochastic MPC have been developed to cope explicitly with uncertainty whenever the robustness that might be implicitly provided by feedback from deterministic MPC is insufficient.
	
	Robust MPC (RMPC) \cite{mayne2000constrained} relies on the assumption of bounded uncertainties to solve a worst-case optimization problem.  Tube-based formulations \cite{langson2004robust,munoz2013recursively} have been developed to account for both the uncertainty on the model and on the disturbance realizations. Since RMPC accounts for all possible realization of the uncertainties, thus neglecting any available distributional information, it often results in a conservative controller.
	
	When distributional information on the model uncertainty and disturbance is available, Stochastic MPC (SMPC) can reduce the conservatism of RMPC by enforcing constraints to hold in probability \cite{cannon2009probabilistic,cannon2010stochastic}. Unfortunately, analytical solutions to SMPC are only available under specific assumptions on the uncertainties distributions. Alternatively, randomized methods such as the sample average approximation~\cite{kleywegt2002sample} and the scenario approach~\cite{calafiore2006scenario} can be employed to reformulate the stochastic problem into a large, but finite dimensional, deterministic program. Since randomized methods leverage sampling, they can handle generic distributions and can be applied when these are only accessible through sampling.  
	Applications of the scenario approach for a SMPC with uncertain dynamics and unbounded stochastic disturbance are analyzed in \cite{micheli2022scenario} and \cite{calafiore2012robust}, where the scenarios are obtained by sampling the dynamics and the disturbance distributions.
	
	Crucially, RMPC and SMPC rely on specific assumptions on the disturbances or, in the case of the randomized methods, availability of large amount of data is required to provide tight probabilistic guarantees. In many situations, we might only have access to an approximate dynamics and limited amount of data regarding disturbance process, resulting in uncertainty on the dynamics and on disturbance distribution. We therefore resort to a Distributionally Robust MPC (DRMPC) formulation that solves a stochastic optimization problem with respect to the worst-case probability distribution within an ambiguity set. This allows to avoid the excessive conservatism of RMPC while protecting against the distributional mismatch that can negatively impact SMPC.
	
	Lately, a number of works developed DRMPC formulation to robustify against additive disturbances or uncertain constraints considering moment-based \cite{van2015distributionally,coppens2021data,li2021distributionally} and Wasserstein-based \cite{mark2020stochastic,zhong2021data,zolanvari2021data} uncertainty sets.  
	None of these approaches directly addresses the issue of uncertainty in the system dynamics parameters; they instead focus on a limited uncertainty representation from additive noise only. This is a fundamental aspect to consider whenever the system is too complex to be precisely modeled and an approximate model is obtained from limited data or when the controller needs to be robust against plant changes resulting from production variability or system aging.
	
	In this work we propose a data-driven DRMPC formulation that robustifies against both the uncertainty on the approximate dynamics and the uncertainty on the additive disturbance acting on the system. 
	Unlike earlier works on DRMPC, we directly robustify against the distributional mismatch in the predicted state trajectories. We do so by defining the ambiguity set as a ball centered on the empirical distribution given by the predictions obtained with the uncertain dynamics and radius defined with respect to the Wasserstein metric. This allows us to capture both the uncertainty in the approximate dynamics and the uncertainty on the disturbance distributions.
	Leveraging earlier results from the distributionally robust literature we derive finite sample guarantees and present tractable finite-dimensional convex formulations for the DRMPC problem with worst-case expectation cost and Conditional Value-at-Risk constraints.
	The derived formulation generalizes previous works on DRMPC that only address the effect of the additive disturbance and consider known dynamics. Whenever the dynamics can be considered exactly known, the formulation reduces to a DRMPC with only additive uncertainty, highlighting the consistency of the approach.
	
	\textit{Outline:} The remainder of this paper is organized as follows. In Section~\ref{Sec:Problem_Formulation} we introduce the problem setting and formally state the distributionally robust control problem. In Section~\ref{Sec:MainResults} we derive finite sample guarantees for the DRMPC problem with worst-case expectation cost and Conditional Value-at-Risk constraints. Section~\ref{Sec:Numerics} demonstrates the effectiveness of the proposed algorithm on a simple closed-loop example and Section~\ref{Sec:Conclusion} concludes the paper.
	
	\textit{Notation:} We denote by $\delta_x$ the Dirac distribution at $x$. We denote by $\| \cdot\|:=\| \cdot \|_2$ the standard Euclidean distance, for matrix norms the same symbol is used to denote the norm induced by the $2$-norm.  We define by $(\cdot)_+ := \max\{\cdot,0\}$.
	
	\section{Problem formulation}\label{Sec:Problem_Formulation}
	Consider the discrete-time linear time-invariant (LTI) system subject to additive disturbances
	\begin{equation*}\label{eq:Dynamics}\scalemath{0.84}{
		x_{k+1}=\bar{A} x_{k} + \bar{B} u_{k} +w_{k}\ ,}
	\end{equation*}
	with state $x_k \in \mathbb{R}^{n}$, control input $u_k \in \mathbb{R}^{m}$ and disturbance  $w_k \in \mathbb{R}^{n}$ distributed according to a unknown probability distribution $\mathbb{P}_{w}$ over the unknown and possibly unbounded support set $\mathcal{W} \subseteq \mathbb{R}^{n}$.
	We assume that the state is measured, but that we do not have access to the true system dynamics.
	
	We introduce the compact $T$-step formulation of the dynamics
	\begin{equation}\label{eq:dynamics_matrix_form}\scalemath{0.84}{
		\boldsymbol{y}=\bar{L} \boldsymbol{z} + {\boldsymbol{\xi}}}
	\end{equation}
	with
	\begin{equation*}\scalemath{0.84}{
		\boldsymbol{y}=\!\begin{bmatrix}
			x_{1} \\
			x_{2} \\
			\vdots \\
			x_{T}
		\end{bmatrix}, \ \boldsymbol{u}=\!\begin{bmatrix}
			u_{0} \\
			u_{1} \\
			\vdots \\
			u_{T\!-\!1}
		\end{bmatrix}, \ \boldsymbol{w}=\!\begin{bmatrix}
			w_{0} \\
			w_{1} \\
			\vdots \\
			w_{T\!-\!1}
		\end{bmatrix}, \ \boldsymbol{z} =\! \begin{bmatrix}	\boldsymbol{x}_0 \\ \boldsymbol{u} \end{bmatrix},\ {\boldsymbol{\xi}} =\! \bar{H} \boldsymbol{w},}
	\end{equation*}
	\begin{equation*}\scalemath{0.8}{
			\bar{{L}}\!=\!\!
			\begin{bmatrix}
				\bar{A} & \bar{B} & 0_{n\! \times\! m} & \cdots & 0_{n\! \times\! m} \\
				\bar{A}^{2} & \bar{A} \bar{B} & \bar{B} & \ddots & \vdots \\
				\vdots & \vdots & \ddots & \ddots & 0_{n\! \times\! m} \\
				\bar{A}^{T} & \bar{A}^{T\!\minus\!1}\! \bar{B} & \cdots & \bar{A} \bar{B} & \bar{B}
			\end{bmatrix}\!,\, 
		\bar{{H}}\!=\!\!
			\begin{bmatrix}
				I_{n\! \times\! n} & 0_{n\! \times\! n} & \cdots & 0_{n\! \times\! n} \\
				\bar{A} & I_{n\! \times\! n} & \ddots & \vdots \\
				\vdots & \ddots & \ddots & 0_{n\! \times\! n} \\
				\bar{A}^{T\!\minus\!1} & \cdots & \bar{A} & I_{n\! \times\! n}
			\end{bmatrix}\!.}\vspace{5pt}
	\end{equation*}
	The future state sequence $\boldsymbol{y}$ depends on the initial condition $\boldsymbol{x}_0$, the control input sequence $\boldsymbol{u}$ and the multi-step disturbance ${\boldsymbol{\xi}} \in \Xi \subseteq \mathbb{R}^{nT}$ distributed according to $\mathbb{P}_{\boldsymbol{\xi}}$. We also define $\mathbb{P}_{\boldsymbol{y}|\boldsymbol{z}}$ as the probability distribution of $\boldsymbol{y}$ resulting from the dynamics~\eqref{eq:dynamics_matrix_form} for given initial condition and control input sequence.
	
	We are interested in designing a receding horizon predictive controller that minimizes a given cost function over a finite horizon $T$.
	At each time $\tau$ we would like to solve the finite horizon optimal control (FHOC) problem 
	\begin{equation}\label{eq:MPC_problem_2}\scalemath{0.84}{
		\begin{aligned}
			\bar{J}_{\tau} := \min_{\boldsymbol{z}} \quad & \mathbb{E}^{\boldsymbol{y}\sim\mathbb{P}_{\boldsymbol{y}|\boldsymbol{z}}}\left[ h\left(\boldsymbol{y}, \boldsymbol{z} \right) \right]\\
			\text{s.t.}\quad& \boldsymbol{z} \in  \mathcal{Z}_{\tau}\ \\
			\quad & \text{CVaR}_{1-\beta}^{\boldsymbol{y}\sim\mathbb{P}_{\boldsymbol{y}|\boldsymbol{z}}}\left(g(\boldsymbol{y}, \boldsymbol{z})\right) \leq 0  \ .
		\end{aligned}}
	\end{equation}
	The set $\mathcal{Z}_{\tau}$ includes a constraint that sets $\boldsymbol{x}_0$ equal to the measured value of the state at the time $\tau$ when~\eqref{eq:MPC_problem_2} is solved, and possibly other constraints that we want to impose on the input trajectory. Since the future states $\boldsymbol{y}$ are uncertain, we enforce the constraint $g\left( \boldsymbol{y},\boldsymbol{z}\right)\leq 0$ with $g\left( \boldsymbol{y},\boldsymbol{z}\right): \mathbb{R}^{nT}\times \mathbb{R}^{nT} \to \mathbb{R}$ as a Conditional Value-at-Risk (CVaR) constraint. 
	\begin{Definition}[Conditional Value-at-Risk]
		For a random variable $\omega\in\Omega\subseteq\mathbb{R}^r$ with distribution $\mathbb{P}_{\omega}$ and a function $\phi:\mathbb{R}^r\rightarrow\mathbb{R}$, the CVaR of level $\beta$ is defined as
		\begin{equation*}\scalemath{0.84}{
			\text{CVaR}_{1-\beta}^{\omega\sim\mathbb{P}_{\omega}}\left(\phi(\omega)\right):= \inf_{t \in \mathbb{R}} \left[{\beta}^{-1}\, \mathbb{E}^{\omega\sim\mathbb{P}_{\omega}}\left[\left(\phi(\omega) +t\right)_{+}\right]-t\right]\ .}
		\end{equation*}
	\end{Definition}
	\subsection{The sample average approximation}\label{Sec:subsec_SAA}
	Since $\bar{L}$ and $\mathbb{P}_{\boldsymbol{\xi}}$ are unknown, \eqref{eq:MPC_problem_2} cannot be solved directly. We assume the availability of a dataset $\mathcal{D}^{N,T}$ comprising $N$ $T$-step input-state trajectories $\{\boldsymbol{z}^i,\boldsymbol{y}^i\}$, $i=1,\dots,N$, collected by applying an input trajectory $\boldsymbol{u}^i$ of length $T$ starting from an initial condition $\boldsymbol{x}_0^i$. We also assume to have access to an approximate $T$-step linear predictor $\hat{L}$, obtained from a separate identification procedure, and make the following assumption
	\vspace{-3pt}
	\begin{Assumption}{}\label{Ass:Spectral_norm_bound}
		For a given confidence level $\alpha \in (0,1)$, let $\gamma(\alpha):(0,1)\rightarrow \mathbb{R}$ be a function such that
		\begin{equation*}\scalemath{0.84}{
			\mathbb{P}^{\mathcal{ID}}\left\{\left\|\bar{L}-\hat{L}\right\| \leq \gamma(\alpha)\right\} \geq 1-\alpha\ .}
		\end{equation*}
	With $\mathbb{P}^{\mathcal{ID}}$ the probability related to the identification process.
	\end{Assumption}
	With this assumption we essentially bound the mismatch between the approximate and true dynamics. 
	References~\cite{dean2020sample,sarkar2019near,simchowitz2018learning} provide conditions for Assumption~\ref{Ass:Spectral_norm_bound} to hold, when the prediction model is obtained by linear regression from historical data. Nonetheless, the resulting theoretical bounds can be quite conservative in practice~\cite{dean2020sample}.
	
	We can use the data in $\mathcal{D}^{N,T}$ and $\hat{L}$ to compute $N$ approximate $T$-step residuals
	\begin{equation}\label{eq:Multistep_Residuals}\scalemath{0.84}{
		\hat{\boldsymbol{\xi}}^i := \boldsymbol{y}^i - \hat{L} \boldsymbol{z}^i \ ,\ \ i=1,\dots,N\ .}
	\end{equation}
	Given some initial condition $\boldsymbol{x}_0$ and control input trajectory $\boldsymbol{u}$, we can use the multi-step predictor $\hat{L}$ and the multi-step residuals obtained in~\eqref{eq:Multistep_Residuals} to compute approximate multi-step predictions as
	\begin{equation}\label{eq:Multistep_Prediction}\scalemath{0.84}{
		\hat{\boldsymbol{y}}^i(\boldsymbol{z}) := \hat{L} \boldsymbol{z} + \hat{\boldsymbol{\xi}}^i\ , \ \ i=1,\dots,N\ .}
	\end{equation}
	
	In the spirit of the Sample Average Approximation (SAA)~\cite{kleywegt2002sample}, we can approximate the solution of~\eqref{eq:MPC_problem_2} by
	\begin{equation}\label{eq:MPC_problem_SAA}\scalemath{0.84}{
		\begin{aligned}
			\hat{J}^{SAA} := \min_{\boldsymbol{z}} \quad & \mathbb{E}^{\boldsymbol{y}\sim\hat{\mathbb{P}}(\boldsymbol{z})}\left[ h\left(\boldsymbol{y}, \boldsymbol{z} \right) \right]\\
			\text{s.t.}\quad& \boldsymbol{z} \in  \mathcal{Z}_{\tau}\ \\
			\quad & \text{CVaR}_{1-\beta}^{\boldsymbol{y}\sim\hat{\mathbb{P}}(\boldsymbol{z})}\left(g(\boldsymbol{y}, \boldsymbol{z})\right) \leq 0\ ,
		\end{aligned}}
	\end{equation}
	with
	\begin{equation*}\scalemath{0.84}{
		\hat{\mathbb{P}}(\boldsymbol{z}):=
		\frac{1}{N} \sum_{i=1}^{N} \delta_{\hat{\boldsymbol{y}}^i(\boldsymbol{z})}}
	\end{equation*}
the empirical distribution of $\hat{\boldsymbol{y}}^i(\!\boldsymbol{z}\!),\, i\!=\!1,\dots,N$, as in~\eqref{eq:Multistep_Prediction}.
	
	\subsection{The distributionally robust formulation}\label{Sec:DR_FHOC}
	While simple to implement,~\eqref{eq:MPC_problem_SAA} can lead to poor out-of-sample performance and cannot provide tight probabilistic guarantees if we only have access to a limited amount of data or when there is a mismatch between the approximate and true dynamics. Thus, we resort to a distributionally robust formulation of~\eqref{eq:MPC_problem_2} to robustify against both the uncertainty related to the approximate dynamics and the disturbance acting on the system by optimizing the worst-case expectation within an ambiguity set defined using the Wasserstein metric. While our work generalizes to the so-called p-Wasserstein metric with an arbitrary norm, we restrict our attention to the 1-Wasserstein metric with the Euclidean norm.
	\vspace{-3pt}
	\begin{Definition}[Wasserstein metric~\cite{kantorovich1958space,mohajerin2018data}]
		Consider distributions $\mathbb{Q}_1,\mathbb{Q}_2 \in \mathcal{M}(\mathcal{Y})$, where $\mathcal{M}(\mathcal{Y})$ is the set of all probability distributions $\mathbb{Q}$ supported on $\mathcal{Y}\subseteq\mathbb{R}^{nT}$ such that $\mathbb{E}\left[\| \boldsymbol{y} \|\right] < \infty $.
		The Wasserstein metric $d_{\mathrm{W}} : \mathcal{M}(\mathcal{Y}) \times \mathcal{M}(\mathcal{Y}) \rightarrow \mathbb{R}_{\geq 0}$ between the distributions $\mathbb{Q}_1$ and $\mathbb{Q}_2$ is defined as
		\begin{equation}\label{eq:Wasserstein_distance}\scalemath{0.84}{
			d_{W}\left(\mathbb{Q}_{1}, \mathbb{Q}_{2}\right):=\inf \left\{\int_{\mathcal{Y}^{2}}\left\|\boldsymbol{y}_{1}-\boldsymbol{y}_{2}\right\| \Pi\left(\mathrm{d} \boldsymbol{y}_{1}, \mathrm{d} \boldsymbol{y}_{2}\right)\right\},}
		\end{equation}
		where $\Pi$ is the joint distribution of $\boldsymbol{y}_1$ and $\boldsymbol{y}_2$ with marginals $\mathbb{Q}_1$ and $\mathbb{Q}_2$.
	\end{Definition}
	
	The distributionally robust (DR) FHOC problem can now be written as
	\begin{equation}\label{eq:DR_MPC_1}\scalemath{0.84}{
		\begin{aligned}
			\hat{J}^{DR} := \min_{\boldsymbol{z}}  &\sup_{\mathbb{Q} \in \mathcal{B}^{\varepsilon}\left(\hat{\mathbb{P}}(\boldsymbol{z})\right)} \mathbb{E}^{\boldsymbol{y}\sim\mathbb{Q}}\left[ h\left(\boldsymbol{y}, \boldsymbol{z} \right) \right]\\
			\text{s.t.} \quad& \quad\boldsymbol{z} \in  \mathcal{Z}_{\tau}\ \\
			\quad & \sup_{\mathbb{Q} \in \mathcal{B}^{\varepsilon}\left(\hat{\mathbb{P}}(\boldsymbol{z})\right)}\text{CVaR}_{1-\beta}^{\boldsymbol{y}\sim\mathbb{Q}}\left(g(\boldsymbol{y}, \boldsymbol{z})\right) \leq 0\ ,
		\end{aligned}}
	\end{equation}
	where the ambiguity set 
	\begin{equation*}\scalemath{0.84}{
		\mathcal{B}^{\varepsilon}\left(\hat{\mathbb{P}}(\boldsymbol{z})\right) := \left\{\mathbb{Q}\in \mathcal{M}(\mathcal{Y})\middle| d_{W}\left(\hat{\mathbb{P}}(\boldsymbol{z}),\mathbb{Q}\right)\leq \varepsilon \right\} }
	\end{equation*}
	is defined as a ball of radius $\varepsilon$ with respect to the Wasserstein metric~\eqref{eq:Wasserstein_distance}, centered on the empirical distribution $\hat{\mathbb{P}}(\boldsymbol{z})$.
	Differently from other approaches, e.g. \cite{mark2020stochastic,zhong2021data}, we define the center of the ambiguity set as the empirical distribution of the $T$-step predictions $\hat{\boldsymbol{y}}^i(\boldsymbol{z})$,~\mbox{$i=1,\dots,N$}. As a consequence, the ambiguity set not only depends on the offline collected trajectories and on the approximate predictor, but also on the optimization variable $\boldsymbol{z}$, i.e., on the initial condition $\boldsymbol{x}_0$ and the control inputs sequence $\boldsymbol{u}$. Notice that separately formulating the worst-case in the cost and in the constraints might introduce some conservatism as the solution robustifies against two, possibly different, worst-case distributions.
	
	\section{Main results/Finite sample guarantees}\label{Sec:MainResults}
	We now want to derive a lower bound on the radius $\varepsilon$ that guarantees, with high probability, that the true distribution $\mathbb{P}_{\boldsymbol{y}|\boldsymbol{z}}$ is contained in the ambiguity set $\mathcal{B}^{\varepsilon}(\hat{\mathbb{P}}(\boldsymbol{z}))$. This will allow us to show that if the DR Problem~\eqref{eq:DR_MPC_1} is feasible and the minimizer $\boldsymbol{z}^*$ attains an optimal cost $\hat{J}^{DR}(\boldsymbol{z}^*)$, then, with high confidence, $\boldsymbol{z}^*$ is a feasible solution to the original Problem~\eqref{eq:MPC_problem_2} and the resulting cost $\bar{J}(\boldsymbol{z}^*)$ is upper bounded by $\hat{J}^{DR}(\boldsymbol{z}^*)$.
	
	We require a technical assumption on the distribution of the multi-step noise $\boldsymbol{\xi} = \bar{H} \boldsymbol{w}$.
	
	\begin{Assumption}[Light-tail assumption]\label{Ass:Light_Tail}
		For some constants $a>1$ and $\gamma>0$ 
		\begin{equation*}\scalemath{0.84}{
			\mathscr{E}_{a,b} = \mathbb{E}\left[e^{b \|{\boldsymbol{\xi}}\|^a}\right]< +\infty\ .}
		\end{equation*}
	\end{Assumption}
	\vspace{-2pt}
	This assumption is a condition on the decay rate of the tail of the probability distribution $\mathbb{P}_{w}$ and it is trivially satisfied when $w\sim\mathbb{P}_{w}$ is sub-Gaussian or when $\mathcal{W}$ is compact. 
	
	To help us in the derivation of the upcoming proofs, we define the distribution
	\begin{equation*}\scalemath{0.84}{
		\bar{\mathbb{P}}(\boldsymbol{z}):=\frac{1}{N} \sum_{i=1}^{N} \delta_{\bar{\boldsymbol{y}}^i(\boldsymbol{z})}\ ,}
	\end{equation*}
	where
	\begin{equation*}\scalemath{0.84}{
		\begin{aligned}
			\bar{\boldsymbol{y}}^i(\boldsymbol{z}) :=& \bar{L} \boldsymbol{z} + \bar{\boldsymbol{\xi}}^i,\ \text{ for } i=1,\dots,N\ \ , \\
			\bar{\boldsymbol{\xi}}^i :=& \boldsymbol{y}^i - \bar{L} \boldsymbol{z}^i\ ,\ \text{ for } i=1,\dots,N\ .
		\end{aligned}}
	\end{equation*}
	For given initial condition and control input sequence $\boldsymbol{z}$, the future state prediction $\bar{\boldsymbol{y}}^i(\boldsymbol{z})$, $i=1,\dots,N$, is  computed using the true (unknown) multi-step dynamics $\bar{L}$ and the true multi-step residuals $\bar{\boldsymbol{\xi}}^i$, $i=1,\dots,N$. Since we employed here the true dynamics to derive the residuals from the available data and to compute the prediction, $\bar{\mathbb{P}}(\boldsymbol{z})$ is an empirical sampled version of the true distribution ${\mathbb{P}}_{\boldsymbol{y}|\boldsymbol{z}}$. Note that we introduced these objects for analysis purposes only and, since $\bar{L}$ is unknown, we cannot compute them.
	
	To obtain the ambiguity set radius $\varepsilon$, we start by upper bounding the Wasserstein distance $d_{W}(\hat{\mathbb{P}}(\boldsymbol{z}), \mathbb{P}_{\boldsymbol{y}|\boldsymbol{z}})$ between the empirical distribution obtained with the approximate dynamics $\hat{\mathbb{P}}(\boldsymbol{z})$ and the true unknown distribution $\mathbb{P}_{\boldsymbol{y}|\boldsymbol{z}}$, inspired by~\cite{kannan2020residuals}. We apply the triangle inequality to obtain
	\begin{equation*}\label{eq:1}\scalemath{0.84}{
		d_{W}\left(\hat{\mathbb{P}}(\boldsymbol{z}), \mathbb{P}_{\boldsymbol{y}|\boldsymbol{z}}\right) \leq d_{W}\left(\hat{\mathbb{P}}(\boldsymbol{z}), \bar{\mathbb{P}}(\boldsymbol{z})\right) +d_{W}\left(\bar{\mathbb{P}}(\boldsymbol{z}), \mathbb{P}_{\boldsymbol{y}|\boldsymbol{z}}\right).}
	\end{equation*}
	With the following lemmas we upper bound, uniformly in $\boldsymbol{z}$, the terms appearing on the right-hand side.
		\begin{Lemma}\label{Lemma:2}
			Under Assumption~\ref{Ass:Spectral_norm_bound}, let $\alpha$ specify a risk level with $\alpha \in(0,1)$. Then, for each $\boldsymbol{z} \in \mathcal{Z}_{\tau}$
			\begin{equation*}\label{eq:LemmaII_statement}\scalemath{0.84}{
					\mathbb{P}^{\mathcal{ID}}\left\{d_{W}\left(\hat{\mathbb{P}}(\boldsymbol{z}), \bar{\mathbb{P}}(\boldsymbol{z})\right) \geq \gamma\left(\frac{\alpha}{2}\right)\frac{1}{N} \sum_{i=1}^{N} \left\| \boldsymbol{z}-\boldsymbol{z}^{i} \right\|  \right\} \leq \frac{\alpha}{2}\ .}
			\end{equation*}
		\end{Lemma}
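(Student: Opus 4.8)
The plan is to construct an explicit feasible transport plan between $\hat{\mathbb{P}}(\boldsymbol{z})$ and $\bar{\mathbb{P}}(\boldsymbol{z})$ and then bound the resulting cost using the spectral-norm bound of Assumption~\ref{Ass:Spectral_norm_bound}. The key observation is that both measures are \emph{uniform} empirical distributions on $N$ atoms indexed by the same $i\in\{1,\dots,N\}$: $\hat{\mathbb{P}}(\boldsymbol{z})$ is supported on $\hat{\boldsymbol{y}}^i(\boldsymbol{z})$ and $\bar{\mathbb{P}}(\boldsymbol{z})$ on $\bar{\boldsymbol{y}}^i(\boldsymbol{z})$, each atom carrying mass $1/N$. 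Hence the index-preserving (``diagonal'') coupling $\Pi=\frac1N\sum_{i=1}^N\delta_{(\hat{\boldsymbol{y}}^i(\boldsymbol{z}),\,\bar{\boldsymbol{y}}^i(\boldsymbol{z}))}$ has the correct marginals, is admissible in~\eqref{eq:Wasserstein_distance}, and yields the deterministic estimate
\begin{equation*}\scalemath{0.84}{
 d_{W}\!\left(\hat{\mathbb{P}}(\boldsymbol{z}),\bar{\mathbb{P}}(\boldsymbol{z})\right)\le\frac1N\sum_{i=1}^N\left\|\hat{\boldsymbol{y}}^i(\boldsymbol{z})-\bar{\boldsymbol{y}}^i(\boldsymbol{z})\right\|\ .}
\end{equation*}

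Next I would evaluate the per-atom displacement. Substituting $\hat{\boldsymbol{y}}^i(\boldsymbol{z})=\hat{L}\boldsymbol{z}+\hat{\boldsymbol{\xi}}^i$ with $\hat{\boldsymbol{\xi}}^i=\boldsymbol{y}^i-\hat{L}\boldsymbol{z}^i$, and likewise $\bar{\boldsymbol{y}}^i(\boldsymbol{z})=\bar{L}\boldsymbol{z}+\boldsymbol{y}^i-\bar{L}\boldsymbol{z}^i$, the data term $\boldsymbol{y}^i$ cancels and one obtains $\hat{\boldsymbol{y}}^i(\boldsymbol{z})-\bar{\boldsymbol{y}}^i(\boldsymbol{z})=(\hat{L}-\bar{L})(\boldsymbol{z}-\boldsymbol{z}^i)$. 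Using submultiplicativity of the $2$-norm induced matrix norm, $\|(\hat{L}-\bar{L})(\boldsymbol{z}-\boldsymbol{z}^i)\|\le\|\hat{L}-\bar{L}\|\,\|\boldsymbol{z}-\boldsymbol{z}^i\|$, so combining with the previous display gives, for every $\boldsymbol{z}$,
\begin{equation*}\scalemath{0.84}{
 d_{W}\!\left(\hat{\mathbb{P}}(\boldsymbol{z}),\bar{\mathbb{P}}(\boldsymbol{z})\right)\le\left\|\hat{L}-\bar{L}\right\|\,\frac1N\sum_{i=1}^N\left\|\boldsymbol{z}-\boldsymbol{z}^i\right\|\ .}
\end{equation*}

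Finally I would invoke Assumption~\ref{Ass:Spectral_norm_bound} at confidence level $\alpha/2$: the event $\mathcal{G}:=\{\|\bar{L}-\hat{L}\|\le\gamma(\alpha/2)\}$ satisfies $\mathbb{P}^{\mathcal{ID}}(\mathcal{G})\ge1-\alpha/2$, and on $\mathcal{G}$ the last bound becomes $d_{W}(\hat{\mathbb{P}}(\boldsymbol{z}),\bar{\mathbb{P}}(\boldsymbol{z}))\le\gamma(\alpha/2)\frac1N\sum_{i}\|\boldsymbol{z}-\boldsymbol{z}^i\|$; passing to the complement gives the asserted $\alpha/2$ bound on the event where this inequality fails. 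Uniformity in $\boldsymbol{z}$ is automatic because $\mathcal{G}$ does not depend on $\boldsymbol{z}$. I do not anticipate a real obstacle here: the only points needing a little care are the exact cancellation of the data term $\boldsymbol{y}^i$ in the displacement, and the justification that the diagonal coupling is legitimate precisely because both empirical measures place equal mass $1/N$ on atoms sharing a common index set (so no optimal-transport machinery beyond exhibiting one feasible plan is required).
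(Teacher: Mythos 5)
Your proposal is correct and follows essentially the same route as the paper's proof: bound $d_{W}(\hat{\mathbb{P}}(\boldsymbol{z}),\bar{\mathbb{P}}(\boldsymbol{z}))$ by the average per-atom displacement (the paper invokes the definition of the Wasserstein distance where you exhibit the diagonal coupling explicitly), cancel $\boldsymbol{y}^i$ to get $(\hat{L}-\bar{L})(\boldsymbol{z}-\boldsymbol{z}^i)$, apply the induced-norm inequality, and invoke Assumption~\ref{Ass:Spectral_norm_bound} at level $\alpha/2$. The only difference is presentational, with your explicit feasible transport plan making the first inequality slightly more self-contained.
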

		\begin{proof} \begin{equation*}\scalemath{0.84}{
				\begin{aligned}
					d_{W}\left(\hat{\mathbb{P}}(\boldsymbol{z}), \bar{\mathbb{P}}(\boldsymbol{z})\right)
					& \leq\frac{1}{N} \sum_{i=1}^{N}\left\|\hat{\boldsymbol{y}}^{i}(\boldsymbol{z})-\bar{\boldsymbol{y}}^{i}(\boldsymbol{z})\right\| \\
					& =\frac{1}{N} \sum_{i=1}^{N}\left\|\left(\hat{L}\boldsymbol{z}+\hat{\boldsymbol{\xi}}^{i}\right)-\left(\bar{L}\boldsymbol{z}+\bar{\boldsymbol{\xi}}^{i}\right)\right\| \\
					& =\frac{1}{N} \sum_{i=1}^{N}\left\|\left(\hat{L}\boldsymbol{z}-\bar{L}\boldsymbol{z}\right) +\left(\bar{L}\boldsymbol{z}^{i}-\hat{L}\boldsymbol{z}^{i}\right) \right\|\\
					& \leq\frac{1}{N} \sum_{i=1}^{N}\left(\left\|\hat{L}-\bar{L} \right\| \left\| \boldsymbol{z}-\boldsymbol{z}^{i} \right\| \right)\ ,
				\end{aligned}}
			\end{equation*}
			where the first step follows from the definition of Wasserstein distance and the last step from the fact that the matrix norm is consistent with the vector norm that induced it. Therefore,
			\begin{equation*}\label{eq:LemmaId}\scalemath{0.84}{
				\begin{aligned}
					&\mathbb{P}^{\mathcal{ID}}\left\{d_{W}\left(\hat{\mathbb{P}}(\boldsymbol{z}), \bar{\mathbb{P}}(\boldsymbol{z})\right) \geq \gamma\left(\frac{\alpha}{2}\right)\left(\frac{1}{N} \sum_{i=1}^{N} \left\| \boldsymbol{z}-\boldsymbol{z}^{i} \right\| \right)  \right\} \\
					& \leq \mathbb{P}^{\mathcal{ID}} \left\{\!\left\|\hat{L}\!-\!\bar{L} \right\| \frac{1}{N} \!\sum_{i=1}^{N}\left\| \boldsymbol{z}\!-\!\boldsymbol{z}^{i} \right\|\geq \gamma\!\left(\frac{\alpha}{2}\right)\frac{1}{N}\! \sum_{i=1}^{N} \left\| \boldsymbol{z}\!-\!\boldsymbol{z}^{i} \right\| \! \right\} \leq \frac{\alpha}{2}\ .
				\end{aligned}}
			\end{equation*}
		\end{proof}
		\pagebreak
		The derived bound on the distance $d_{W}(\hat{\mathbb{P}}(\boldsymbol{z}), \bar{\mathbb{P}}(\boldsymbol{z}))$ depends on the model mismatch $\| \hat{L}-\bar{L} \|$ and on the average distance between the $\boldsymbol{z}$ and the collected $\boldsymbol{z}^i$, $i = 1,\dots,N$.
		\begin{Lemma}{(\cite[Theorem 2]{fournier2015rate})}\label{Lemma:fournier}: Under Assumption~\ref{Ass:Light_Tail}, for $nT > 2$, for all $\kappa>0$, $ N \in \mathbb{N}$, and $\boldsymbol{z} \in \mathcal{Z}_{\tau}$
		\begin{equation*}\scalemath{0.84}{
					\mathbb{P}^N \left\{d_{W}\left(\bar{\mathbb{P}}(\boldsymbol{z}), \mathbb{P}_{\boldsymbol{y}|\boldsymbol{z}}\right)\geq \kappa\right\} \leq
					\left\{\begin{aligned}
						&c_1 \exp \left(-c_2 N \kappa^{nT}\right) &\text{if } \kappa \leq 1 \\
						&c_1 \exp \left(-c_2 N \kappa^{a}\right) &\text{if } \kappa>1\
					\end{aligned} \right. , }
			\end{equation*}
			where $\mathbb{P}^N$ is the $N$-fold distribution of the data generating process~\eqref{eq:dynamics_matrix_form}. The positive constants $c_1$ and $c_2$ depend on the dimensions of ${\boldsymbol{\xi}}$ and on the constants $a$, $b$ and $\mathscr{E}_{a,b}$.
		\end{Lemma}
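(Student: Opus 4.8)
The plan is to recognise that Lemma~\ref{Lemma:fournier} is not really a statement about the control problem, but a direct application of the Fournier--Guillin measure-concentration bound to the empirical measure of the \emph{noise}, after two elementary reductions. First I would use the structure of the dataset: each collected trajectory satisfies $\boldsymbol{y}^i = \bar{L}\boldsymbol{z}^i + \boldsymbol{\xi}^i$ with $\boldsymbol{\xi}^i = \bar{H}\boldsymbol{w}^i$ an independent draw from $\mathbb{P}_{\boldsymbol{\xi}}$, so the true residual recovers the realised noise exactly, $\bar{\boldsymbol{\xi}}^i = \boldsymbol{y}^i - \bar{L}\boldsymbol{z}^i = \boldsymbol{\xi}^i$, and hence $\bar{\boldsymbol{y}}^i(\boldsymbol{z}) = \bar{L}\boldsymbol{z} + \boldsymbol{\xi}^i$. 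Since $\boldsymbol{y}\sim\mathbb{P}_{\boldsymbol{y}|\boldsymbol{z}}$ has, by~\eqref{eq:dynamics_matrix_form}, the law of $\bar{L}\boldsymbol{z} + \boldsymbol{\xi}$ with $\boldsymbol{\xi}\sim\mathbb{P}_{\boldsymbol{\xi}}$, the measure $\bar{\mathbb{P}}(\boldsymbol{z})$ is exactly the empirical distribution built from $N$ i.i.d.\ samples of $\mathbb{P}_{\boldsymbol{y}|\boldsymbol{z}}$, so an empirical-measure concentration bound applies to it under $\mathbb{P}^N$.

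Second, I would remove the dependence on $\boldsymbol{z}$. Both $\bar{\mathbb{P}}(\boldsymbol{z})$ and $\mathbb{P}_{\boldsymbol{y}|\boldsymbol{z}}$ are pushforwards of $\frac{1}{N}\sum_{i=1}^N\delta_{\boldsymbol{\xi}^i}$ and of $\mathbb{P}_{\boldsymbol{\xi}}$, respectively, under the common translation $\boldsymbol{\xi}\mapsto\bar{L}\boldsymbol{z}+\boldsymbol{\xi}$. Because the $1$-Wasserstein distance is defined with the Euclidean ground metric, $\|(\bar{L}\boldsymbol{z}+\boldsymbol{\xi}_1) - (\bar{L}\boldsymbol{z}+\boldsymbol{\xi}_2)\| = \|\boldsymbol{\xi}_1 - \boldsymbol{\xi}_2\|$, so pushing any coupling through this translation preserves its transport cost and
\begin{equation*}\scalemath{0.84}{
d_{W}\!\left(\bar{\mathbb{P}}(\boldsymbol{z}), \mathbb{P}_{\boldsymbol{y}|\boldsymbol{z}}\right) = d_{W}\!\left(\frac{1}{N}\sum_{i=1}^N\delta_{\boldsymbol{\xi}^i},\ \mathbb{P}_{\boldsymbol{\xi}}\right),}
\end{equation*}
which no longer depends on $\boldsymbol{z}$. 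This is the one place where the argument uses a genuine structural fact rather than bookkeeping, and it is what allows the final constants to be uniform over $\mathcal{Z}_\tau$.

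Third, I would invoke \cite[Theorem~2]{fournier2015rate} for the probability measure $\mathbb{P}_{\boldsymbol{\xi}}$ on $\mathbb{R}^{nT}$. Assumption~\ref{Ass:Light_Tail} supplies the required exponential-moment condition with exponent $a>1$, coefficient $b>0$ and finite value $\mathscr{E}_{a,b}$; taking $p=1$ and ambient dimension $d=nT$, the regime $p<d/2$ is exactly the hypothesis $nT>2$, and $a>1=p$ places us in the case where the large-deviation tail is governed by $\kappa^{a}$. The cited theorem then yields constants $c_1,c_2>0$, depending only on $nT$, $a$, $b$ and $\mathscr{E}_{a,b}$, such that $\mathbb{P}^N\{d_W(\frac1N\sum_i\delta_{\boldsymbol{\xi}^i},\mathbb{P}_{\boldsymbol{\xi}})\ge\kappa\}$ is bounded by $c_1\exp(-c_2N\kappa^{nT})$ for $\kappa\le 1$ and by $c_1\exp(-c_2N\kappa^{a})$ for $\kappa>1$; combined with the identity of the second paragraph this is precisely the claim.

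I do not expect a real obstacle here, since the lemma is a transcription of a known result; the only points requiring care are (i) checking that the data-generating process makes $\boldsymbol{\xi}^1,\dots,\boldsymbol{\xi}^N$ genuinely i.i.d.\ with law $\mathbb{P}_{\boldsymbol{\xi}}$, so that $\bar{\mathbb{P}}(\boldsymbol{z})$ is an honest empirical measure under $\mathbb{P}^N$, and (ii) the translation-invariance step, which decouples the bound from $\boldsymbol{z}$ and lets the constants be stated independently of the decision variable.
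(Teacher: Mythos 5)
The paper states this lemma as a direct citation of \cite[Theorem 2]{fournier2015rate} and gives no proof of its own, so there is nothing to diverge from: your argument supplies exactly the justification the paper leaves implicit, and it is correct. The two reduction steps you make explicit --- that $\bar{\boldsymbol{\xi}}^i = \boldsymbol{\xi}^i$ so $\bar{\mathbb{P}}(\boldsymbol{z})$ is an honest empirical measure of $N$ i.i.d.\ draws from $\mathbb{P}_{\boldsymbol{y}|\boldsymbol{z}}$, and that translation invariance of $d_W$ under $\boldsymbol{\xi}\mapsto\bar{L}\boldsymbol{z}+\boldsymbol{\xi}$ removes the $\boldsymbol{z}$-dependence and makes the constants uniform over $\mathcal{Z}_\tau$ --- are precisely what is needed to invoke the cited concentration result with $p=1$, $d=nT>2$ and moment exponent $a>p$, and your reading of the case split ($\kappa^{nT}$ for $\kappa\le 1$, $\kappa^{a}$ for $\kappa>1$) matches the theorem.
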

		\vspace{-3pt}
		We are now in the position to prove a finite sample guarantee result.
		\vspace{-3pt}
		\begin{Theorem}\label{Thm:Finite_sample_certificate}
			Under Assumptions~\ref{Ass:Spectral_norm_bound} and~\ref{Ass:Light_Tail}, and for a risk level $\alpha \in(0,1)$, define the radius of the ambiguity set
			\begin{equation}\label{eq:zeta}\scalemath{0.84}{
				\varepsilon(\alpha,\boldsymbol{z}):=\varepsilon_1(\alpha) \left(\frac{1}{N} \sum_{i=1}^{N} \left\| \boldsymbol{z}-\boldsymbol{z}^{i} \right\| \right) +\varepsilon_2(\alpha)\ ,}
			\end{equation}
		with \vspace{-2pt}
			\begin{equation*}\scalemath{0.84}{
				\begin{aligned}
					\varepsilon_1\!(\alpha)\!:=\!\gamma\!\left(\!\frac{\alpha}{2}\!\right)\!, \quad
					\varepsilon_2\!\left(\alpha\right)\!:=\!\left\{ \! \begin{aligned}
						\!&\left(\frac{\log \left(c_1 \frac{2}{\alpha}\right)}{c_2 N}\right)^{\frac{1}{nT}} \text {if } \!N\! \geq \! \frac{\log\! \left(c_1 \frac{2}{\alpha}\right)}{c_2 N} \\
						\!&\left(\frac{\log \left(c_1 \frac{2}{\alpha}\right)}{c_2 N}\right)^{\frac{1}{a}} \text {if } \!N\! <\! \frac{\log\! \left(c_1 \frac{2}{\alpha}\right)}{c_2 N}
					\end{aligned}\right. \!.
				\end{aligned}}
			\end{equation*}
			Let $\hat{J}^{DR}\left(\boldsymbol{z}^{*}\right)$ and $\boldsymbol{z}^{*}$ be the optimal value and a feasible optimizer of the distributionally robust FHOC Problem~\eqref{eq:DR_MPC_1} with decision-dependent ambiguity set $\mathcal{B}^{\varepsilon}(\hat{\mathbb{P}}(\boldsymbol{z}))$ of radius $\varepsilon=\varepsilon(\alpha,\boldsymbol{z})$ as defined in~\eqref{eq:zeta}. Then,
			\begin{equation*}\scalemath{0.84}{
				\left\{\begin{aligned}
				\ &\mathbb{P}\left\{\bar{J}\left(\boldsymbol{z}^{*}\right) \leq \hat{J}^{DR}\left(\boldsymbol{z}^{*}\right)\right\} \geq 1-\alpha \\
				&\mathbb{P}\left\{\text{CVaR}_{1-\beta}^{\boldsymbol{y}\sim\mathbb{P}_{\boldsymbol{y}|\boldsymbol{z}^*}}\left(g(\boldsymbol{y}, \boldsymbol{z}^*)\right) \leq 0 \right\} \geq 1-\alpha
				\end{aligned}\right.\ ,}
			\end{equation*}
		where $\mathbb{P}=\mathbb{P}^{\mathcal{ID}}\times\mathbb{P}^N$.
		\end{Theorem}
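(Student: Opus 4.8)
The plan is to certify, on a single high-probability event of the product measure $\mathbb{P}=\mathbb{P}^{\mathcal{ID}}\times\mathbb{P}^N$, that the true conditional law $\mathbb{P}_{\boldsymbol{y}|\boldsymbol{z}}$ is contained in the decision-dependent ball $\mathcal{B}^{\varepsilon(\alpha,\boldsymbol{z})}(\hat{\mathbb{P}}(\boldsymbol{z}))$ \emph{simultaneously for every} $\boldsymbol{z}\in\mathcal{Z}_{\tau}$; once this is in hand, both claims follow immediately by evaluating at the (random) optimiser $\boldsymbol{z}^{*}$ and using the definitions of the worst-case cost and worst-case CVaR. The containment is produced by splitting $d_{W}(\hat{\mathbb{P}}(\boldsymbol{z}),\mathbb{P}_{\boldsymbol{y}|\boldsymbol{z}})$ with the triangle inequality into the two pieces handled by Lemmas~\ref{Lemma:2} and~\ref{Lemma:fournier}.

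First I would fix the two good events and pin down $\varepsilon_2(\alpha)$. Let $A:=\{\|\bar{L}-\hat{L}\|\le\gamma(\alpha/2)\}$, which by Assumption~\ref{Ass:Spectral_norm_bound} has $\mathbb{P}^{\mathcal{ID}}(A)\ge 1-\alpha/2$; as the proof of Lemma~\ref{Lemma:2} shows, on $A$ the estimate $d_{W}(\hat{\mathbb{P}}(\boldsymbol{z}),\bar{\mathbb{P}}(\boldsymbol{z}))\le\varepsilon_1(\alpha)\frac1N\sum_{i=1}^{N}\|\boldsymbol{z}-\boldsymbol{z}^i\|$ holds deterministically and for all $\boldsymbol{z}$ at once, since its right-hand side is $\|\hat{L}-\bar{L}\|$ times a quantity free of the identification randomness. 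On the data side I would choose $\varepsilon_2(\alpha)$ by equating the Fournier--Guillin tail of Lemma~\ref{Lemma:fournier} to $\alpha/2$: solving $c_1\exp(-c_2N\kappa^{nT})=\alpha/2$ gives $\kappa=(\log(2c_1/\alpha)/(c_2N))^{1/nT}$, with the exponent $1/a$ instead in the regime where this exceeds one, which is precisely $\varepsilon_2(\alpha)$; hence $B:=\{d_{W}(\bar{\mathbb{P}}(\boldsymbol{z}),\mathbb{P}_{\boldsymbol{y}|\boldsymbol{z}})\le\varepsilon_2(\alpha)\}$ satisfies $\mathbb{P}^N(B)\ge 1-\alpha/2$. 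An essential remark to record is that $B$ is actually independent of $\boldsymbol{z}$: since $\bar{\boldsymbol{\xi}}^i=\boldsymbol{y}^i-\bar{L}\boldsymbol{z}^i$ equals the $i$-th realisation of $\boldsymbol{\xi}$, both $\bar{\mathbb{P}}(\boldsymbol{z})$ and $\mathbb{P}_{\boldsymbol{y}|\boldsymbol{z}}$ are the translate by $\bar{L}\boldsymbol{z}$ of, respectively, the empirical and the true law of $\boldsymbol{\xi}$, and the Wasserstein distance is translation invariant; this is what allows $B$ to be invoked at the data-dependent $\boldsymbol{z}^{*}$.

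Next I would combine the two events. A union bound on the product space gives $\mathbb{P}(A\cap B)\ge 1-\mathbb{P}^{\mathcal{ID}}(A^c)-\mathbb{P}^N(B^c)\ge 1-\alpha$, and on $A\cap B$ the triangle inequality yields $d_{W}(\hat{\mathbb{P}}(\boldsymbol{z}),\mathbb{P}_{\boldsymbol{y}|\boldsymbol{z}})\le\varepsilon(\alpha,\boldsymbol{z})$ for every $\boldsymbol{z}\in\mathcal{Z}_{\tau}$, hence for $\boldsymbol{z}=\boldsymbol{z}^{*}$, i.e. $\mathbb{P}_{\boldsymbol{y}|\boldsymbol{z}^{*}}\in\mathcal{B}^{\varepsilon(\alpha,\boldsymbol{z}^{*})}(\hat{\mathbb{P}}(\boldsymbol{z}^{*}))$. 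Since $\boldsymbol{z}^{*}$ is feasible for~\eqref{eq:DR_MPC_1} it lies in $\mathcal{Z}_{\tau}$, and because $\mathbb{P}_{\boldsymbol{y}|\boldsymbol{z}^{*}}$ is one of the measures over which the two suprema in~\eqref{eq:DR_MPC_1} range, we obtain $\bar{J}(\boldsymbol{z}^{*})=\mathbb{E}^{\boldsymbol{y}\sim\mathbb{P}_{\boldsymbol{y}|\boldsymbol{z}^{*}}}[h(\boldsymbol{y},\boldsymbol{z}^{*})]\le\hat{J}^{DR}(\boldsymbol{z}^{*})$ and $\text{CVaR}_{1-\beta}^{\boldsymbol{y}\sim\mathbb{P}_{\boldsymbol{y}|\boldsymbol{z}^{*}}}(g(\boldsymbol{y},\boldsymbol{z}^{*}))\le\sup_{\mathbb{Q}\in\mathcal{B}^{\varepsilon}(\hat{\mathbb{P}}(\boldsymbol{z}^{*}))}\text{CVaR}_{1-\beta}^{\boldsymbol{y}\sim\mathbb{Q}}(g(\boldsymbol{y},\boldsymbol{z}^{*}))\le 0$; both therefore hold on the event $A\cap B$ of probability at least $1-\alpha$, which is exactly the two displayed statements. (Assumption~\ref{Ass:Light_Tail} ensures $\mathbb{P}_{\boldsymbol{y}|\boldsymbol{z}}\in\mathcal{M}(\mathcal{Y})$, so every expectation and CVaR above is a finite real number.)

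The step I expect to be the main obstacle is precisely the decision dependence of the ambiguity set: both lemmas are stated pointwise in $\boldsymbol{z}$, while the certificate must be applied at the data-dependent minimiser $\boldsymbol{z}^{*}$. The way around it is the pair of observations above — that the Lemma~\ref{Lemma:2} bound is uniform in $\boldsymbol{z}$ on the event $A$, and that the Lemma~\ref{Lemma:fournier} quantity $d_{W}(\bar{\mathbb{P}}(\boldsymbol{z}),\mathbb{P}_{\boldsymbol{y}|\boldsymbol{z}})$ does not depend on $\boldsymbol{z}$ by translation invariance — so that conditioning on $A\cap B$ makes the inclusion valid for all $\boldsymbol{z}$ at once; everything else reduces to a union bound and to unwinding the worst-case operators.
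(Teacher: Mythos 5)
Your proof is correct and follows essentially the same route as the paper: a triangle-inequality decomposition of $d_{W}(\hat{\mathbb{P}}(\boldsymbol{z}),\mathbb{P}_{\boldsymbol{y}|\boldsymbol{z}})$, a union bound allocating $\alpha/2$ to each of Lemma~\ref{Lemma:2} and Lemma~\ref{Lemma:fournier} (with $\varepsilon_2$ obtained by inverting the Fournier--Guillin tail), and then unwinding the worst-case expectation and CVaR operators. Your explicit justification for evaluating the containment at the data-dependent optimiser $\boldsymbol{z}^{*}$ --- namely that the Lemma~\ref{Lemma:2} bound holds deterministically and simultaneously for all $\boldsymbol{z}$ on the event $\{\|\bar{L}-\hat{L}\|\le\gamma(\alpha/2)\}$, and that $d_{W}(\bar{\mathbb{P}}(\boldsymbol{z}),\mathbb{P}_{\boldsymbol{y}|\boldsymbol{z}})$ is independent of $\boldsymbol{z}$ by translation invariance of the Wasserstein metric --- makes rigorous the uniformity in $\boldsymbol{z}$ that the paper's proof only asserts.
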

		\begin{proof}
			The claim is a direct consequence of Lemma~\ref{Lemma:fournier} and Lemma~\ref{Lemma:2}. Choosing $\varepsilon=\varepsilon(\alpha,\boldsymbol{z})$ as in~\eqref{eq:zeta} ensures that 
			\begin{equation*}\scalemath{0.84}{
				\begin{aligned}
					&\mathbb{P}\left\{d_{W}\left(\hat{\mathbb{P}}(\boldsymbol{z}), \mathbb{P}_{\boldsymbol{y}|\boldsymbol{z}}\right)\geq \varepsilon(\alpha,\boldsymbol{z}) \right\} \leq\\
					&\quad \leq \mathbb{P}^{\mathcal{ID}}\Big\{d_{W}\left(\hat{\mathbb{P}}(\boldsymbol{z}), \bar{\mathbb{P}}(\boldsymbol{z})\right) \geq\varepsilon_1(\alpha)\frac{1}{N} \sum_{i=1}^{N} \left\| \boldsymbol{z}-\boldsymbol{z}^{i} \right\|  \Big\} \ + \\
					&\quad\quad\quad\quad +\mathbb{P}^{N}\left\{d_{W}\left(\bar{\mathbb{P}}(\boldsymbol{z}), \mathbb{P}_{\boldsymbol{y}|\boldsymbol{z}}\right)\geq \varepsilon_2(\alpha) \right\}\\
					&\quad \leq \frac{\alpha}{2} + \frac{\alpha}{2} =  \alpha\ .
				\end{aligned}}
			\end{equation*}
		The claim follows from the definition of the ambiguity set $\mathcal{B}^{\varepsilon}(\hat{\mathbb{P}}(\boldsymbol{z}))$ and of the DR Problem~\eqref{eq:DR_MPC_1}.
		\end{proof}
		
		The radius in~\eqref{eq:zeta} is the sum of two components, one related to limited number of available samples through Lemma~\ref{Lemma:fournier} and the other to the error in the system dynamics through Lemma~\ref{Lemma:2}.
		While the derivation of $\varepsilon(\alpha,\boldsymbol{z})$ helps us derive theoretical guarantees, it typically leads to conservative bounds in practice. We investigate a practical data-driven approach for choosing the radius $\varepsilon(\alpha,\boldsymbol{z})$ in Section~\ref{Sec:Numerics}.
		
		The supremums over probability distributions appearing in the DR Problem~\eqref{eq:DR_MPC_1} make it an infinite-dimensional optimization program. In the following theorem, leveraging recent results in the DR optimization literature, we show that, for the class of problems with convex piecewise affine cost and constraint functions, Problem~\eqref{eq:DR_MPC_1} can be reformulated as a tractable, finite-dimensional convex program.
		\pagebreak
		\begin{Theorem}\label{Thm:Tractabe_worst_case}
			Let Assumption~\ref{Ass:Spectral_norm_bound} and Assumption~\ref{Ass:Light_Tail} hold. Let $\boldsymbol{y} \!\in\! \mathbb{R}^{nT}$, consider a cost function $h\left(\boldsymbol{y}, \boldsymbol{z} \right) \!=\! \max_{j\leq N_j} h_j(\boldsymbol{y},\boldsymbol{z})$, with $h_j(\boldsymbol{y},\boldsymbol{z})\!=\! a_{j} \boldsymbol{y} + b_{j} \boldsymbol{z} + c_j$ and a constraint function $g\left(\boldsymbol{y}, \boldsymbol{z} \right) \!=\! \max_{k\leq N_k} g_k(\boldsymbol{y},\boldsymbol{z})$, with $g_k(\boldsymbol{y},\boldsymbol{z}) \!=\! d_{k} \boldsymbol{y} + e_{k} \boldsymbol{z} + f_k$.
			Then, the DR FHOC Problem~\eqref{eq:DR_MPC_1} can be formulated as
			\begin{equation*}\scalemath{0.84}{
				\begin{aligned}
					\hat{J}^{DR}=
						\inf_{\boldsymbol{z}, s_{i}}\ &\underline{\lambda} \left(\varepsilon_1(\alpha)\frac{1}{N} \sum_{i=1}^{N} \left\| \boldsymbol{z}-\boldsymbol{z}^{i} \right\| + \varepsilon_2(\alpha)\right)+\frac{1}{N} \sum_{i=1}^{N} s_{i} \\
						\text{ s.t. } & \boldsymbol{z} \in \mathcal{Z}_{CVaR} \\
						& a_{j}\! \left(\!\hat{L}\boldsymbol{z} + \hat{\boldsymbol{\xi}}^{i}\!\right) + b_{j} \boldsymbol{z} +c_j \leq s_{i} \\ &{\forall\  i\!=\!1,\dots,N,\  j\!=\!1,\dots,N_j}\ ,
					\end{aligned}}
			\end{equation*}
			with 
			\begin{equation*}\scalemath{0.84}{
				\begin{aligned}
					&\ {\mathcal{Z}}_{\mathrm{CVaR}}=\\
					& \left\{ \boldsymbol{z}\! \in\! \mathcal{Z}_{\tau}\, \middle|\, \begin{aligned}
						& \exists\ t, q_i \text{  such that }\\
						& \underline{\theta} \left(\varepsilon_1(\alpha)\frac{1}{N} \sum_{i=1}^{N} \left\| \boldsymbol{z}-\boldsymbol{z}^{i} \right\| + \varepsilon_2(\alpha)\right) + \frac{1}{N}\! \sum_{i=1}^{N} \!q_{i} \leq t \beta \\
						&\left(d_{k} \left(\hat{L}\boldsymbol{z} + \hat{\boldsymbol{\xi}}^{i}\right)+ e_{k} \boldsymbol{z} + f_k +t \right)_{+} \leq q_{i} \\
						&\forall\  i=1,\dots,N,\  k=1,\dots,N_k\end{aligned}
					\right\} \ ,\\
					&\quad\underline{\lambda} = \max_{j\leq N_j} \left\|a_{j}\right\|,\quad \underline{\theta} = \max_{k\leq N_k} \left\|d_{k}\right\|.\\
				\end{aligned}}
			\end{equation*}
		\end{Theorem}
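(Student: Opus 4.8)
The plan is to reduce both the worst-case cost and the worst-case CVaR constraint in~\eqref{eq:DR_MPC_1} to finite convex programs by invoking the strong-duality reformulation of Wasserstein worst-case expectations for max-of-affine integrands, cf.~\cite{mohajerin2018data}. Since the ambiguity radius $\varepsilon(\alpha,\boldsymbol{z})$ in~\eqref{eq:zeta} depends on the decision, I would first fix $\boldsymbol{z}\in\mathcal{Z}_\tau$, treat $\varepsilon(\alpha,\boldsymbol{z})$ as a constant, derive the reformulations pointwise in $\boldsymbol{z}$, and only at the end restore the outer minimization over $\boldsymbol{z}$.

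For the cost, observe that for fixed $\boldsymbol{z}$ the integrand $\boldsymbol{y}\mapsto h(\boldsymbol{y},\boldsymbol{z})=\max_{j\le N_j}(a_j\boldsymbol{y}+b_j\boldsymbol{z}+c_j)$ is a maximum of affine functions on $\mathbb{R}^{nT}$ and $\mathcal{B}^{\varepsilon}(\hat{\mathbb{P}}(\boldsymbol{z}))$ is a Wasserstein ball around the empirical measure $\hat{\mathbb{P}}(\boldsymbol{z})=\frac1N\sum_i\delta_{\hat{\boldsymbol{y}}^i(\boldsymbol{z})}$. The Wasserstein duality theorem then rewrites $\sup_{\mathbb{Q}\in\mathcal{B}^{\varepsilon}(\hat{\mathbb{P}}(\boldsymbol{z}))}\mathbb{E}^{\boldsymbol{y}\sim\mathbb{Q}}[h(\boldsymbol{y},\boldsymbol{z})]$ as $\inf_{\lambda\ge0,\,s_i}\{\lambda\varepsilon(\alpha,\boldsymbol{z})+\tfrac1N\sum_i s_i\}$ subject to $a_j\hat{\boldsymbol{y}}^i(\boldsymbol{z})+b_j\boldsymbol{z}+c_j\le s_i$ and $\|a_j\|\le\lambda$ for all $i,j$; since the support is all of $\mathbb{R}^{nT}$, the dual variables associated with support constraints vanish, leaving only these Lipschitz-type constraints. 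Because they do not couple $\lambda$ with the $s_i$ and the objective is nondecreasing in $\lambda$, the inner minimization sets $\lambda=\underline{\lambda}=\max_{j}\|a_j\|$. Substituting $\hat{\boldsymbol{y}}^i(\boldsymbol{z})=\hat L\boldsymbol{z}+\hat{\boldsymbol{\xi}}^i$ and $\varepsilon(\alpha,\boldsymbol{z})$ from~\eqref{eq:zeta}, and then minimizing over $\boldsymbol{z}$, reproduces the stated objective together with the constraints $a_j(\hat L\boldsymbol{z}+\hat{\boldsymbol{\xi}}^i)+b_j\boldsymbol{z}+c_j\le s_i$.

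For the CVaR constraint, I would expand CVaR by its definition, so the requirement becomes $\sup_{\mathbb{Q}}\inf_{t\in\mathbb{R}}[\beta^{-1}\mathbb{E}^{\mathbb{Q}}[(g(\boldsymbol{y},\boldsymbol{z})+t)_+]-t]\le0$, then interchange $\sup_{\mathbb{Q}}$ and $\inf_t$ — legitimate since the bracketed map is affine in $\mathbb{Q}$ and convex in $t$ and the Wasserstein ball is convex and weakly compact, with the inner infimum over $t$ attained by coercivity — to obtain the equivalent condition: there exists $t\in\mathbb{R}$ with $\beta^{-1}\sup_{\mathbb{Q}}\mathbb{E}^{\mathbb{Q}}[(g(\boldsymbol{y},\boldsymbol{z})+t)_+]-t\le0$. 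The integrand $(g(\boldsymbol{y},\boldsymbol{z})+t)_+=\max\{0,\max_{k\le N_k}(d_k\boldsymbol{y}+e_k\boldsymbol{z}+f_k+t)\}$ is again a maximum of affine functions of $\boldsymbol{y}$ (one piece identically zero), so the same duality theorem applies; eliminating the dual scalar as before yields $\theta=\underline{\theta}=\max_k\|d_k\|$ (the zero piece forcing $\theta\ge0$ and $q_i\ge0$), and merging $q_i\ge0$ with $q_i\ge d_k\hat{\boldsymbol{y}}^i(\boldsymbol{z})+e_k\boldsymbol{z}+f_k+t$ into the single inequality $(d_k\hat{\boldsymbol{y}}^i(\boldsymbol{z})+e_k\boldsymbol{z}+f_k+t)_+\le q_i$. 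Substituting $\hat{\boldsymbol{y}}^i(\boldsymbol{z})=\hat L\boldsymbol{z}+\hat{\boldsymbol{\xi}}^i$ and $\varepsilon(\alpha,\boldsymbol{z})$ recovers exactly $\mathcal{Z}_{\mathrm{CVaR}}$. Finally I would note that the reformulated program has finitely many variables ($\boldsymbol{z}$, $\{s_i,q_i\}_{i=1}^N$, $t$) and that its objective and constraints are convex — sums of affine terms, the convex maps $\boldsymbol{z}\mapsto\|\boldsymbol{z}-\boldsymbol{z}^i\|$, and $(\cdot)_+$ composed with affine maps, with $\mathcal{Z}_\tau$ convex by assumption — hence a tractable finite-dimensional convex program.

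The main obstacle is the interchange of $\sup_{\mathbb{Q}}$ and $\inf_t$ in the CVaR step: one must justify a minimax theorem (e.g.\ Sion's) by checking that the decision-dependent ball $\mathcal{B}^{\varepsilon}(\hat{\mathbb{P}}(\boldsymbol{z}))$ is weakly compact — it is tight, since every $\mathbb{Q}$ in it satisfies $\mathbb{E}^{\mathbb{Q}}[\|\boldsymbol{y}\|]\le\mathbb{E}^{\hat{\mathbb{P}}(\boldsymbol{z})}[\|\boldsymbol{y}\|]+\varepsilon$, and it is weakly closed — and that the argument is carried out pointwise in $\boldsymbol{z}$ before the outer infimum is reintroduced. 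The remaining steps — invoking the Wasserstein duality theorem and eliminating the dual scalars $\lambda$ and $\theta$ — are routine bookkeeping, and Assumptions~\ref{Ass:Spectral_norm_bound}–\ref{Ass:Light_Tail} enter only to ensure $\varepsilon_2(\alpha)$ is well defined and that the relevant distributions lie in $\mathcal{M}(\mathcal{Y})$.
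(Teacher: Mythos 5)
Your proposal is correct and follows essentially the same route as the paper: the paper likewise applies the Wasserstein duality result (Corollary~5.1 of~\cite{mohajerin2018data}) to the max-of-affine cost, reformulates the worst-case CVaR constraint via Proposition~V.1 of~\cite{hota2019data} (which encapsulates the $\sup$--$\inf$ interchange you justify by hand), eliminates the dual scalars $\lambda,\theta$ down to $\underline{\lambda},\underline{\theta}$ because the support is all of $\mathbb{R}^{nT}$, and handles the decision-dependent radius by noting the bound holds uniformly in $\boldsymbol{z}$ before restoring the outer minimization. The only difference is that you spell out in-line (minimax interchange, weak compactness of the ball, elimination of the dual variables) what the paper delegates to the cited results.
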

		\begin{proof}
			For a convex piecewise affine cost function $h\left(\boldsymbol{y}, \boldsymbol{z} \right)$, we can apply Corollary 5.1 of~\cite{mohajerin2018data} to reformulate the supremum appearing in the worst-case expectation of~\eqref{eq:DR_MPC_1} as
			\begin{equation}\label{eq:Thm7_proof_worst_case_expectation}\scalemath{0.84}{
				 	\begin{aligned}
					\sup_{\mathbb{Q} \in \mathcal{B}^{\varepsilon}\!\left(\hat{\mathbb{P}}(\boldsymbol{z})\right)}\! \mathbb{E}^{\boldsymbol{y}\sim\mathbb{Q}}\left[ h\left(\boldsymbol{y}, \boldsymbol{z} \right) \right] =
						\inf _{\lambda, s_{i}}\ &\lambda\, \varepsilon(\alpha,\boldsymbol{z})+\frac{1}{N} \sum_{i=1}^{N} s_{i} \\
						\text{ s.t. } & a_{j} \hat{\boldsymbol{y}}^{i}(\boldsymbol{z}) + b_{j} \boldsymbol{z} +c_j \leq s_{i} \\ 
						& \left\|a_{j}\right\| \leq \lambda\\
						& {\forall\  i\!=\!1,\dots,N,\  j\!=\!1,\dots,N_j}\ .
					\end{aligned}}
			\end{equation}
			From the definition of CVaR, the constraints in Problem~\eqref{eq:DR_MPC_1} can be written as the set
			\begin{equation*}\scalemath{0.84}{
				\begin{aligned}
					&{\mathcal{Z}}_{\mathrm{CVaR}}=\!\left\{ \boldsymbol{z} \in \mathcal{Z}_{\tau} \middle| \sup_{\mathbb{Q} \in \mathcal{B}^{\varepsilon}\left(\hat{\mathbb{P}}(\boldsymbol{z})\right)}\text{CVaR}_{1-\beta}^{\boldsymbol{y}\sim\mathbb{Q}}\left(g(\boldsymbol{y}, \boldsymbol{z})\right) \leq 0 \right\}\\
					& = \left\{ \boldsymbol{z} \in \mathcal{Z}_{\tau} \middle| \sup_{\mathbb{Q} \in \mathcal{B}^{\varepsilon}\left(\hat{\mathbb{P}}(\boldsymbol{z})\right)} \inf_{t \in \mathbb{R}} \left[\mathbb{E}^{\boldsymbol{y}\sim\mathbb{Q}}\left[(g\left(\boldsymbol{y}, \boldsymbol{z} \right)+t)_{+}\right]-t \beta\right] \leq 0 \right\} .
				\end{aligned}}
			\end{equation*}
			
			For a convex piecewise affine constraint function $g\left(\boldsymbol{y}, \boldsymbol{z} \right)$, following Proposition V.1 of~\cite{hota2019data}, we can reformulate the supremum appearing in the constraint as
			\begin{equation}\label{eq:Thm7_proof_constrints}\scalemath{0.84}{
					{\mathcal{Z}}_{\mathrm{CVaR}}=\left\{ \boldsymbol{z} \in \mathcal{Z}_{\tau} \middle| \begin{aligned}
						& \exists\ t, \theta, q_i \text{  such that } \\
						& \theta \varepsilon(\alpha,\boldsymbol{z}) + \frac{1}{N}\! \sum_{i=1}^{N} \!q_{i} \leq t \beta \\
						&\left(d_{k} \hat{\boldsymbol{y}}^{i}(\boldsymbol{z}) + e_{k} \boldsymbol{z} + f_k +t \right)_{+} \leq q_{i} \\
						&\left\|d_{k}\right\| \leq \theta \\
						&\forall\  i=1,\dots,N,\  k=1,\dots,K_k\end{aligned}
					\right\}\ .}
				\end{equation}
							
				Substituting~\eqref{eq:Multistep_Prediction} in~\eqref{eq:Thm7_proof_worst_case_expectation} and~\eqref{eq:Thm7_proof_constrints}, and taking the infimum over $\boldsymbol{z}$ results in the finite-dimensional convex program
				\begin{equation*}\scalemath{0.84}{
					\begin{aligned}\hat{J}^{DR} =
						\inf _{\boldsymbol{z},s_{i}}\ & \underline{\lambda}\, \varepsilon(\alpha,\boldsymbol{z})+\frac{1}{N} \sum_{i=1}^{N} s_{i} \\
						\text { s.t. } & \boldsymbol{z} \in {\mathcal{Z}}_{\mathrm{CVaR}} \\
						& a_{j}\! \left(\!\hat{L}\boldsymbol{z} + \hat{\boldsymbol{\xi}}^{i}\!\right) + b_{j} \boldsymbol{z} +c_j \leq s_{i} \\
						& \underline{\lambda} \!=\! \max_{j\leq N_j} \left\|a_{j}\right\| \quad{\forall\  i\!=\!1,\dots,N,\  j\!=\!1,\dots,N_j} 
					\end{aligned}}
				\end{equation*}
				with
				\begin{equation*}\scalemath{0.84}{
					{\mathcal{Z}}_{\mathrm{CVaR}}\!=\!\left\{ \boldsymbol{z}\! \in\! {\mathcal{Z}}_{\tau} \middle|\, \begin{aligned}
						& \exists\ t, q_i \text{  such that }\\
						& \underline{\theta} \varepsilon(\alpha,\boldsymbol{z}) + \frac{1}{N}\! \sum_{i=1}^{N} \!q_{i} \leq t \beta \\
						&\left(d_{k} \left(\hat{L}\boldsymbol{z} + \hat{\boldsymbol{\xi}}^{i}\right)+ e_{k} \boldsymbol{z} + f_k +t \right)_{+} \leq q_{i} \\
						&\underline{\theta} = \max_{k\leq N_k} \left\|d_{k}\right\| \\
						&\forall\  i=1,\dots,N,\  k=1,\dots,N_k\end{aligned}
					\right\}.}
				\end{equation*}
				Since $\boldsymbol{y} \!\in\! \mathbb{R}^{nT}$, $\underline{\lambda}$ and $\underline{\theta}$ are not optimization variable, leading to an exact convex reformulation.
				As the bound on the radius $\varepsilon(\alpha,\boldsymbol{z})$ holds uniformly in $\boldsymbol{z}$, the claim follows.
			\end{proof}
			
			\section{Numerical example}\label{Sec:Numerics}
			We consider the system
			\begin{equation*}\label{eq:Dynamics_example}\scalemath{0.84}{
				\boldsymbol{x}_{k+1}=
				\begin{bmatrix} 
					0.9 & 0.1\\
					0.05 & 0.9\end{bmatrix} \boldsymbol{x}_{k} + 
				\begin{bmatrix}
					0 \\
					1 \\
				\end{bmatrix} \boldsymbol{u}_{k} +\boldsymbol{w}_k,}
			\end{equation*}
			with additive disturbance $\boldsymbol{w}_k \sim \mathcal{N}(0,0.03^2)$.
			We assume that we have access to a dataset $\mathcal{D}^{N,T}$ comprising $N$ trajectories of length $T=5$, collected by applying a random input sequence \mbox{$\boldsymbol{u}^i_k \sim \mathcal{N}(0,0.5^2)$}, $k=1,\dots,T$, $i=1,\dots,N$, starting from random initial conditions $\boldsymbol{x}^i_0\sim\mathcal{N}(0,0.5^2)$, $i=1,\dots,N$. We consider cost and constraint functions
			\begin{equation}\label{eq:cost_fun_example}\scalemath{0.84}{
				h\left(\boldsymbol{y}, \boldsymbol{z} \right) = \|\boldsymbol{y}_{(1)}-\mathds{1}\|_1 \ ,}
			\end{equation}
			\begin{equation}\label{eq:constr_fun_example}	\scalemath{0.84}{		
					g\left(\boldsymbol{y}, \boldsymbol{z} \right) = \max\{ [ (\boldsymbol{y}_{(1)}-\mathds{1})^\top, \boldsymbol{y}_{(2)}^\top ]^\top\}\ , }
			\end{equation}		
		where $\boldsymbol{y}_{(1)}$ and $\boldsymbol{y}_{(2)}$ represent the first and the second entry of the predicted future states, $\| \!\cdot\! \|_1 $ is the $1$-norm, and $\mathds{1}$ is a vector of ones of appropriate dimensions. The cost function~\eqref{eq:cost_fun_example} requires the first entry of the state to stay as close as possible to a constant reference of $1$, while the constraint~\eqref{eq:constr_fun_example} enforces $\boldsymbol{y}_{(1)}$ to be smaller than $1$ and $\boldsymbol{y}_{(2)}$ to be positive through a CVaR constraint of level $\beta\!=\!0.2$. To guarantee the existence of a feasible solution at each MPC step, we consider a slack formulation for the CVaR constraint, with the slack variable linearly weighted in the cost function with a weight of $10^6$. For all the simulations, the initial condition is set to $\boldsymbol{x}_0 \!=\! [0.9,0.9]^\top $ and the MPC horizon is set to $30$ steps.
			
		To study the effect of different ambiguity set radius on the closed-loop cost and violations, we first fix a dataset of size of $N=10$ and a closed-loop noise realization. Instead of assuming a given $\hat{L}$, we use the data in $\mathcal{D}^{N,T}$ to compute a $T$-step linear predictor by solving the following least-squares minimization problem
			\begin{equation}\label{eq:Multistep_LeasSquares_Regression}\scalemath{0.84}{
				\begin{aligned}
					\hat{L}:=\arg\min_{L} \quad & \sum_{i=1}^N \left\| L \boldsymbol{z}^i - \boldsymbol{y}^i \right\|^2\ ,
				\end{aligned}}
			\end{equation}
			under the assumption that the collected data is sufficiently informative. To improve the quality of the predictor, we reduce the number of unknowns by enforcing on $\hat{L}$ the block triangular causal structure of $\bar{L}$. With the identified dynamics $\hat{L}$ we compute the $N$ $T$-step residuals as in~\eqref{eq:Multistep_Residuals}. We intentionally choose a low signal-to-noise ratio and a small number of trajectories in the offline-collected dataset to highlight how limited data availability and model mismatch (in this case due to the identification process) can hamper MPC performance. We then proceed to solve the DRMPC for all the possible combinations of ${\varepsilon_1,\varepsilon_2}\in\{10^{\minus7},10^{\minus6},10^{\minus5},10^{\minus4},10^{\minus3},10^{\minus2},10^{\minus1},10^{0}\}$. The resulting closed-loop cost and violations for different values of the ambiguity set radius are shown in Figure~\ref{fig:ClosedLoopCostAndViolHeatmap}. For small values of the radius, the controller experiences a large number of constraint violations, as the radius of the ambiguity set is increased, the number of violations is reduced at the expense of higher closed-loop costs. 
			\begin{figure}
				\includegraphics[width=0.99\columnwidth]{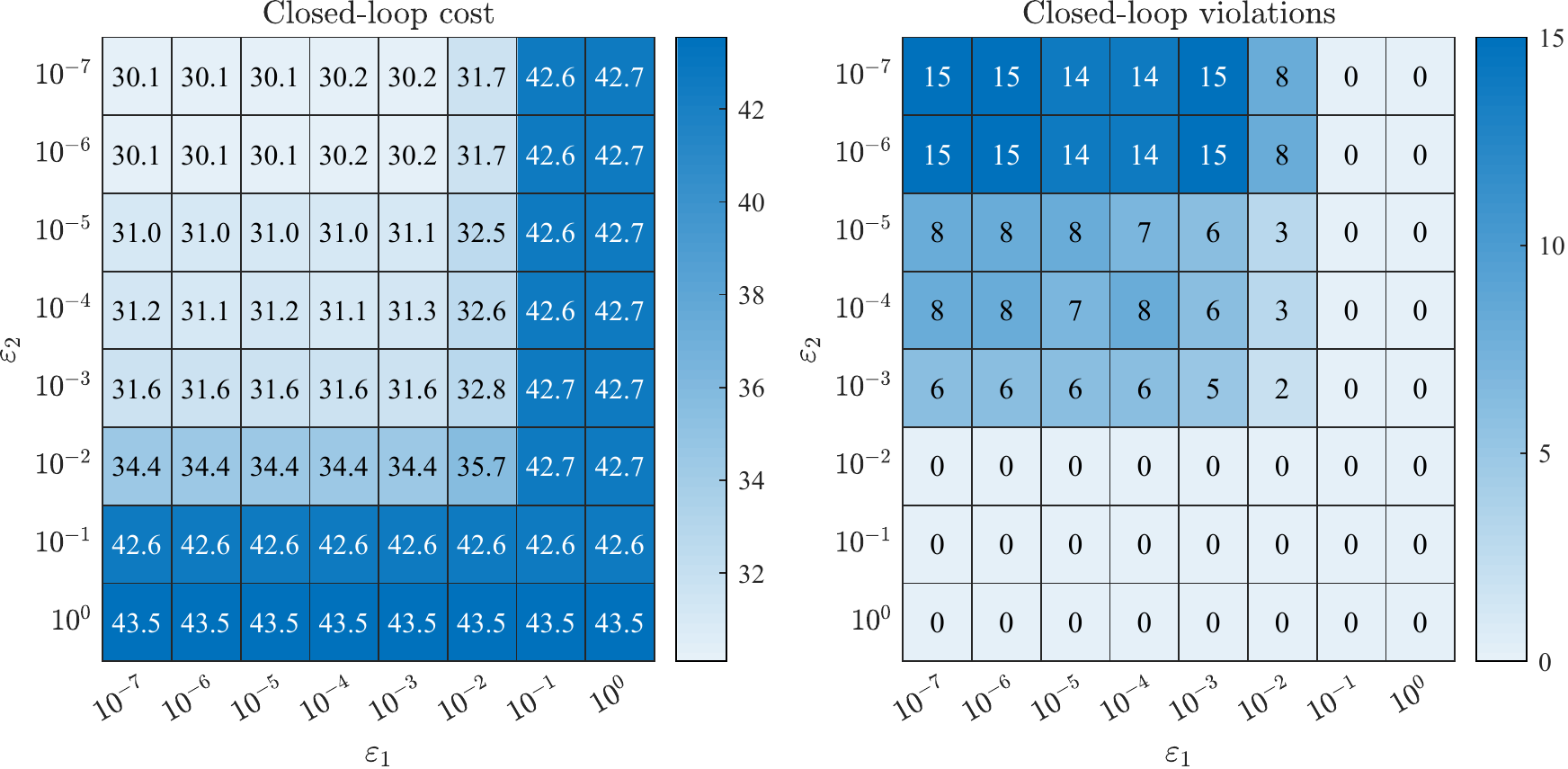}
				\caption{Closed-loop cost (left) and violations (right) for different values of $\varepsilon_1$ and $\varepsilon_2$.}\label{fig:ClosedLoopCostAndViolHeatmap}
			\end{figure}
			
			As mentioned in \cite{dean2020sample}, the theoretical bound on $\gamma(\alpha)$ in Assumption~\ref{Ass:Spectral_norm_bound} can be quite conservative in practice. Moreover, the use of the triangle inequality in Theorem~\ref{Thm:Finite_sample_certificate} can introduce some extra conservatism, leading to a loose bound on the radius $\varepsilon(\alpha,\boldsymbol{z})$. Informed by the expression in~\eqref{eq:zeta}, we aim to obtain a less conservative ambiguity set radius by directly exploiting data to approximate the Wasserstein distance $d_{W}(\hat{\mathbb{P}}(\boldsymbol{z}), \mathbb{P}_{\boldsymbol{y}|\boldsymbol{z}})$.
			
				\begin{algorithm}[ht]\label{Algo:LeaveOneOut_1}
				\caption{Data-driven empirical radius estimate}
				\smaller \begin{algorithmic}[1]
					\renewcommand{\algorithmicrequire}{\textbf{Input:}}
					\renewcommand{\algorithmicensure}{\textbf{Output:}}
					\REQUIRE $\mathcal{D}^{N,T}$.
					\ENSURE $\hat{L}$, $\varepsilon_1$, $\varepsilon_2$. 
					\FOR {$\ell=1$ to $N$}
					\STATE define $\mathcal{D}^{N\!\minus\! 1,T}_{I_{\ell}}$ the leave-one-out dataset with indices $I_{\ell}=\{1,\dots,N\}\backslash{\ell}$;
					\STATE compute $\tilde{L}^{\ell}$ from $\mathcal{D}^{N\!\minus\!1,T}_{I_{\ell}}$ by least-squares regression as in \eqref{eq:Multistep_LeasSquares_Regression};
					\STATE compute residuals $\tilde{\boldsymbol{\xi}}^i_{\ell} := \boldsymbol{y}^i - \tilde{L}_{\ell} \boldsymbol{z}^i$, $i=1,\dots,N$;
					\STATE compute $ V_{\ell}=\frac{1}{N-1} \sum_{i\in I_{\ell}}\left\| \boldsymbol{z}^{\ell}- \boldsymbol{z}^{i} \right\|$;
					\STATE compute $ E_{\ell}=\frac{1}{N^2-N} \sum_{i\in I_{\ell}} \left\| {\boldsymbol{y}}^{\ell}- \tilde{\boldsymbol{y}}^i(\boldsymbol{z}^{\ell}) \right\|$ with $\tilde{\boldsymbol{y}}^i(\boldsymbol{z}^{\ell}) = \tilde{L}_{\ell} \boldsymbol{z}^{\ell} + \tilde{\boldsymbol{\xi}}^i_{\ell}$;
					\ENDFOR
					\RETURN $\hat{L}=\frac{1}{N} \sum_{\ell =1}^{N}\tilde{L}_{\ell}$ and $\{\varepsilon_1\geq 0, \varepsilon_2\geq0\}$ that minimize $\sum_{{\ell}=1}^{N} \left\| \varepsilon_1 V_{\ell} + \varepsilon_2 - E_{\ell} \right\|$.
				\end{algorithmic}\normalsize
			\end{algorithm}
		
			In Algorithm~\ref{Algo:LeaveOneOut_1} we use a leave-one-out procedure to obtain an estimate of the dynamics and of the ambiguity set parameters $\varepsilon_1$ and $\varepsilon_2$. For each leave-one-out dataset, $E_{\ell}$ is the Wasserstein distance $d_{W}( \tilde{\mathbb{P}}_{i\in I_{\ell}}(\boldsymbol{z}^{\ell}), \tilde{\mathbb{P}}(\boldsymbol{z}^{\ell}))$ between the leave-one-out empirical distribution $\tilde{\mathbb{P}}_{i\in I_{\ell}}(\boldsymbol{z}^{\ell})\!:=\!\frac{1}{N-1} \sum_{i\in I_{\ell}} \delta_{\tilde{\boldsymbol{y}}^i(\boldsymbol{z}^{\ell})}$ and the one that uses all the samples $\tilde{\mathbb{P}}(\boldsymbol{z})\!:=\!\frac{1}{N} \sum_{i=1}^{N} \delta_{\tilde{\boldsymbol{y}}^i(\boldsymbol{z}^{\ell})}$ in $\boldsymbol{z}\!=\!\boldsymbol{z}^j$. The only difference between the two distributions is that $\tilde{\mathbb{P}}(\boldsymbol{z}^{\ell})$ contains one extra impulse in $\tilde{\boldsymbol{y}}^{\ell}(\boldsymbol{z}^{\ell})$. As the predictions are computed for $\boldsymbol{z}\!=\!\boldsymbol{z}^{\ell}$, by construction we have $\tilde{\boldsymbol{y}}^{\ell}(\boldsymbol{z}^{\ell})\!=\!{\boldsymbol{y}}^{\ell}$ independently of the dynamics, making $E_{\ell}$ the distributional mismatch that we can expect from a new independent observation~$(\boldsymbol{z}^{\ell},\boldsymbol{y}^{\ell})\sim~\mathbb{P}_{\boldsymbol{y}|\boldsymbol{z}=\boldsymbol{z}^{\ell}}$.
			
			\begin{figure}[b]
				\includegraphics[width=1\columnwidth]{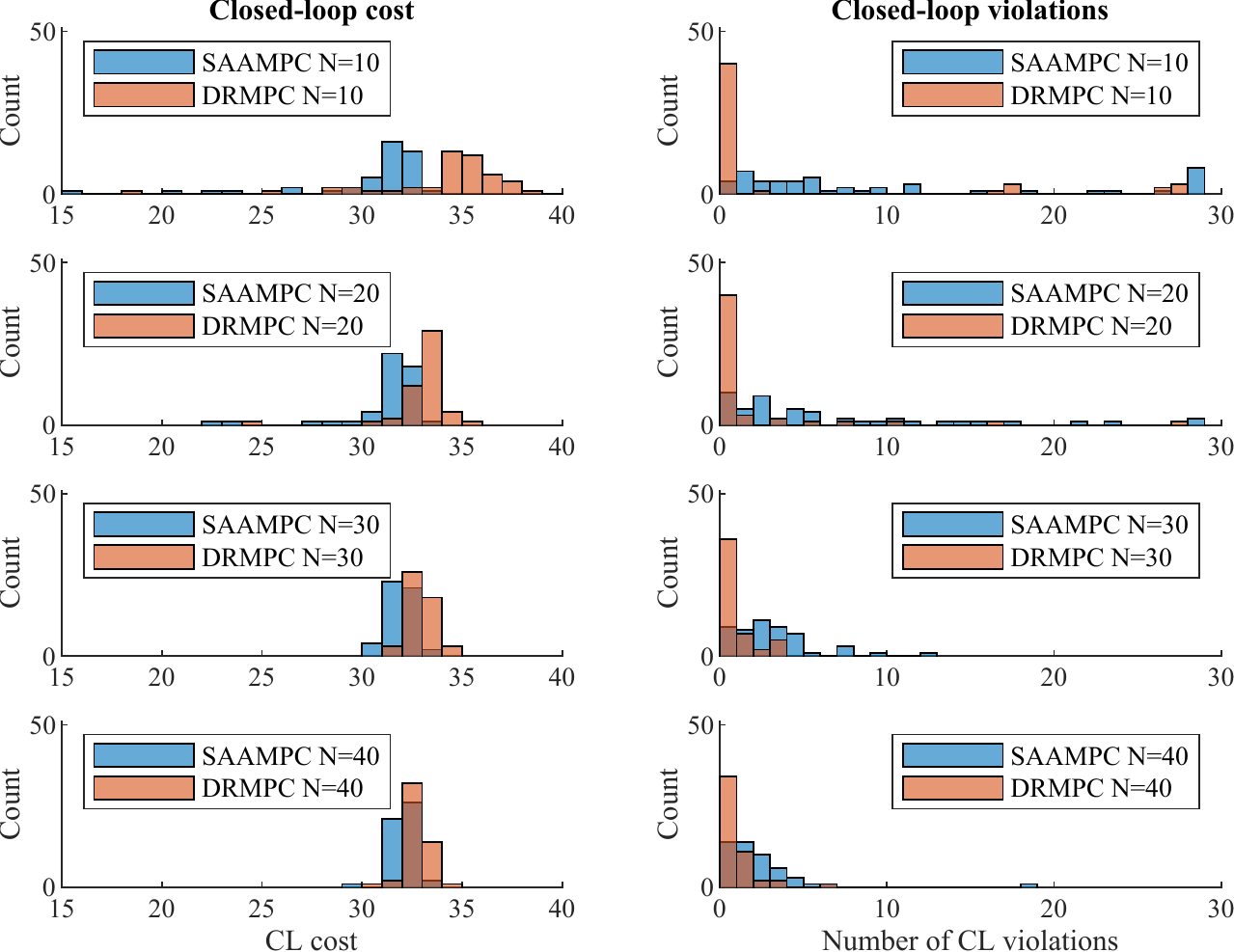}
				\caption{Comparison of Closed-loop cost (left) and Closed-loop violations (right) for dataset sizes $N=\{10,20,30,40\}$ between SAAMPC and DRMPC with radius tuned as in Algorithm~\ref{Algo:LeaveOneOut_1}.}\label{fig:Closed_Loop_SAAtoDR_cost_viol}
			\end{figure}
			
			In Figure~\ref{fig:Closed_Loop_SAAtoDR_cost_viol} we compare the closed-loop cost and Closed-loop violations for the proposed DR approach with data-driven radius obtained as in Algorithm~\ref{Algo:LeaveOneOut_1} (DRMPC) and for SAA approach~\eqref{eq:MPC_problem_SAA} (SAAMPC) that is obtained by setting $\varepsilon(\alpha, \boldsymbol{z})=0$. We perform simulations for $N\!=\!\{10,20,30,40\}$, each repeated for $50$ realizations of the identification dataset and closed-loop noise trajectory. We can observe that the DRMPC approach can reduce the chance of closed-loop constraint violations, with only a limited increase in closed-loop cost. As $N$ increases, the uncertainty decreases, and we observe a reduction in the performance difference between DRMPC and SAAMPC for both closed-loop cost and constraint violations.
			
			\section{Conclusions}\label{Sec:Conclusion}
			We presented a novel data-driven DRMPC formulation for unknown discrete-time linear time-invariant systems affected by additive uncertainties. We obtained finite sample probabilistic guarantees on the worst-case expectation and CVaR constraint in the presence of uncertainty on both the model of the dynamics and on the disturbance distribution. We then derived a finite-dimensional tractable reformulation of the DR problem for convex piecewise affine cost and constraint functions. Finally, we described a simple data-driven algorithm to obtain an empirical ambiguity set radius estimate and tested the proposed DRMPC against the SAA formulation. The numerical simulations demonstrated the effectiveness of the proposed DRMPC, that achieved a strong reduction in the number of closed-loop constraint violations without substantial increase in the attained closed-loop cost, even when very limited information regarding the dynamics and the disturbance is available.
			Future work focuses on the analysis of the DRMPC closed-loop properties such as stability and recursive feasibility.

			\bibliographystyle{IEEEtran}
			\bibliography{BIB}

		\end{document}